\theoremstyle{definition}
\newtheorem{theorem}{Theorem}[section]
\newtheorem{lemma}[theorem]{Lemma}
\newtheorem{proposition}[theorem]{Proposition}
\newtheorem{corollary}[theorem]{Corollary}
\newtheorem{remark}[theorem]{\rm Remark}
\DeclareMathOperator{\Diff}{Diff}
\DeclareMathOperator{\Ang}{Ang}
\DeclareMathOperator{\id}{id}
\DeclareMathOperator{\vol}{vol}
\DeclareMathOperator{\Symp}{Symp}
\DeclareMathOperator{\rel}{rel}
\DeclareMathOperator{\Ker}{Ker}
\DeclareMathOperator{\Cal}{Cal}
\DeclareMathOperator{\Hom}{Hom}
\DeclareMathOperator{\rot}{rot}
\title[Extensions of quasi-morphisms to the symplectomorphism group]{Extensions of quasi-morphisms to the
symplectomorphism group of the disk}
\author{Shuhei Maruyama}
\address{Graduate School of Mathematics,
Nagoya University, Japan}
\email{m17037h@math.nagoya-u.ac.jp}
\begin{document}

\begin{abstract}
  On the group $\Symp(D, \partial D)$ of
  symplectomorphisms of the disk which are the identity
  near the boundary, there are homogeneous quasi-morphisms called
  the Ruelle invariant and Gambaudo-Ghys quasi-morphisms.
  In this paper, we show that
  the above homogeneous quasi-morphisms
  extend to homogeneous quasi-morphisms on the whole group
  $\Symp(D)$ of symplectomorphisms of the disk.
  As a corollary, we show that the second bounded cohomology
  $H_b^2(\Symp(D))$ is infinite-dimensional.
\end{abstract}

\maketitle

\section{Introduction}

Let $G$ be a group.
A function $\phi:G \to \mathbb{R}$ is called a
{\it quasi-morphism} if there exists a constant $C$ such that
the condition $|\phi(gh) - \phi(g) - \phi(h)| \leq C$
holds for any $g, h \in G$.
A quasi-morphism $\phi$ is {\it homogeneous} if,
for any integer $n \in \mathbb{Z}$ and any $g \in G$,
the condition $\phi(g^n) = n\phi(g)$ holds.
Let $Q(G)$ denote the vector space of all homogeneous
quasi-morphisms on $G$.
The {\it homogenization} $\overline{\phi}$
of a quasi-morphism $\phi$
is defined by
$\overline{\phi}(g) = \lim_{n \to \infty} \phi(g^n)/n$.
This homogenization $\overline{\phi}$ is a homogeneous
quasi-morphism.

Let $K$ be a subgroup of $G$.
It is natural to ask whether given homogeneous quasi-morphism
$\psi:K \to \mathbb{R}$ can be extended to a homogeneous
quasi-morphism on $G$.
This extension problem of quasi-morphisms is studied in some
papers
(\cite{shtern15}, \cite{kawasaki18}, \cite{kawasaki_kimura19}).
In this paper, we consider the extension problem of the
Ruelle invariant and Gambaudo-Ghys quasi-morphisms on
$\Symp(D, \partial D)$ to
the group $\Symp(D)$.

Let $D = \{ (x, y) \in \mathbb{R}^2 \mid x^2 + y^2 \leq 1 \}$
be a unit disk and $\omega = dx \wedge dy$ a standard
symplectic form.
Let $\Symp(D)$ denote the symplectomorphism group
of the disk $D$ and
$\Symp(D, \partial D)$ the subgroup of $\Symp(D)$
consisting of symplectomorphisms which are
the identity near the boundary $\partial D$.
There are many quasi-morphisms on $\Symp(D, \partial D)$.
For example, in \cite{ruelle85}, Ruelle constructed a homogeneous
quasi-morphism on $\Symp(D, \partial D)$,
which is called the Ruelle invariant.
In \cite{entov_polterovich03} and \cite{gambaudo_ghys04},
it was shown that the vector space
$Q(\Symp(D, \partial D))$
of homogeneous quasi-morphisms on $\Symp(D, \partial D)$
is infinite-dimensional.
In \cite{gambaudo_ghys04},
Gambaudo and Ghys constructed
countably many quasi-morphisms on $\Symp(D, \partial D)$
by integrating on the disk the signature quasi-morphism on
pure braid group $P_n$ on $n$-strands.
Brandenbursky\cite{brandenbursky11} generalized this idea
to any quasi-morphisms on $P_n$
and defined the linear map
$\Gamma_n:Q(P_n) \to Q(\Symp(D, \partial D))$.
We call a homogeneous quasi-morphism in $\Gamma_n(Q(P_n))$
a {\it Gambaudo-Ghys quasi-morphism}.

The well-definedness of the Ruelle invariant and
Gambaudo-Ghys quasi-morphisms comes from
the fact that the group $\Symp(D, \partial D)$ is contractible.
Since the group $\Symp(D)$ is not contractible,
both constructions cannot be applied to the group $\Symp(D)$.
However, we show that the following theorem holds.
\begin{theorem}
  The Ruelle invariant and Gambaudo-Ghys quasi-morphisms
  on the group $\Symp(D, \partial D)$ extend
  to homogeneous quasi-morphisms on the group $\Symp(D)$.
  In particular, the vector space $Q(\Symp(D))$ is infinite-dimensional.
\end{theorem}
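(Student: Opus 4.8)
The plan is to lift both families of quasi-morphisms to the universal cover $\widetilde{\Symp(D)}$ and then push them down to $\Symp(D)$ after subtracting a suitable correction term. Two structural facts are used. Since $\Symp(D,\partial D)$ is contractible, the inclusion $\Symp(D,\partial D)\hookrightarrow\Symp(D)$ lifts to an injective homomorphism $\Symp(D,\partial D)\hookrightarrow\widetilde{\Symp(D)}$. On the other hand it is known that $\Symp(D)\simeq SO(2)$; for instance, by Moser's theorem the assignment $f\mapsto f^*\omega$ makes $\Symp(D)$ the fiber of a fibration over the (convex, hence contractible) space of area forms of total area $\pi$, so $\Symp(D)\hookrightarrow\Diff^+(D)\simeq SO(2)$ is a homotopy equivalence. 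Hence $\Symp(D)$ is connected and $\pi_1(\Symp(D))\cong\mathbb{Z}$ is generated by the loop of rotations $z_0=\{R_{2\pi t}\}_{t\in[0,1]}$, so there is a central extension $0\to\langle z_0\rangle\to\widetilde{\Symp(D)}\to\Symp(D)\to 1$ with $\langle z_0\rangle\cong\mathbb{Z}$ and $\Symp(D,\partial D)$ sitting inside $\widetilde{\Symp(D)}$.

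First I would extend the Ruelle invariant and the $\Gamma_n(\mu)$ to $\widetilde{\Symp(D)}$. Their definitions on $\Symp(D,\partial D)$ choose an isotopy from the identity and then measure, respectively, the asymptotic winding of the linearized flow in the canonical trivialization of $TD$, and the asymptotic value of $\overline\mu$ on the braid traced by a generic $n$-tuple of points; contractibility of $\Symp(D,\partial D)$ guarantees independence of the isotopy. A point of $\widetilde{\Symp(D)}$ is precisely a homotopy class rel endpoints of a path from the identity, which is exactly the datum these constructions consume, and two representatives are homotopic rel endpoints, so the same recipe defines functions $\widetilde{\mathrm{Ru}}$ and $\widetilde{\Gamma_n(\mu)}$ on $\widetilde{\Symp(D)}$. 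I expect these to be homogeneous quasi-morphisms whose restrictions along $\Symp(D,\partial D)\hookrightarrow\widetilde{\Symp(D)}$ are the original ones: the quasi-morphism estimates are pointwise on $D$ and do not see the boundary condition, and the integrals over $D$ still converge by compactness even though the integrands no longer vanish near $\partial D$. Checking these points carefully is the main technical work.

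Next I would produce an auxiliary homogeneous quasi-morphism on $\widetilde{\Symp(D)}$ that detects $z_0$ but vanishes on $\Symp(D,\partial D)$. Restriction to the boundary is a homomorphism $\Symp(D)\to\Diff^+(S^1)$; since $\widetilde{\Symp(D)}$ is simply connected it lifts to $\widetilde{\Symp(D)}\to\widetilde{\Diff^+(S^1)}$, and composing with Poincar\'e's rotation number — a homogeneous quasi-morphism — gives $\tilde\rho\in Q(\widetilde{\Symp(D)})$. Here $\tilde\rho(z_0)\neq 0$, since the boundary of the rotation loop is a full turn of $S^1$ whose lift is a unit translation, whereas $\tilde\rho$ vanishes on $\Symp(D,\partial D)$, whose elements are the identity near $\partial D$ and hence restrict on the boundary to the identity of $\widetilde{\Diff^+(S^1)}$.

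Finally, for each $\phi$ among $\widetilde{\mathrm{Ru}}$ and the $\widetilde{\Gamma_n(\mu)}$, set $\Phi=\phi-\frac{\phi(z_0)}{\tilde\rho(z_0)}\tilde\rho$, a homogeneous quasi-morphism with $\Phi(z_0)=0$. Since $z_0$ is central and homogeneous quasi-morphisms are additive on commuting elements, $\Phi(z_0^n g)=\Phi(g)$ for all $n$ and $g$, so $\Phi$ is constant on the fibers of $\widetilde{\Symp(D)}\to\Symp(D)$ and descends to a homogeneous quasi-morphism $\overline\Phi$ on $\Symp(D)$. Because $\tilde\rho$ vanishes on $\Symp(D,\partial D)$, the restriction of $\overline\Phi$ there equals the original Ruelle invariant, resp. Gambaudo-Ghys quasi-morphism, which is the asserted extension. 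As the $\Gamma_n(\mu)$ already span an infinite-dimensional subspace of $Q(\Symp(D,\partial D))$, and restricting a vanishing linear combination of the $\overline\Phi$'s back to $\Symp(D,\partial D)$ forces all coefficients to vanish, the space $Q(\Symp(D))$ is infinite-dimensional.
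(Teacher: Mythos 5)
Your proposal is correct and follows essentially the same route as the paper: extend the Ruelle and Gambaudo--Ghys constructions to $\widetilde{\Symp}(D)$, subtract the multiple $\phi(z_0)\cdot\rho^*\rot$ of the rotation number pulled back from $\widetilde{\Diff}_+(S^1)$ so that the result vanishes on $\pi_1(\Symp(D))$, descend to $\Symp(D)$, and observe that the correction term vanishes on $\Symp(D,\partial D)$ so the restriction is unchanged. The only cosmetic difference is that you justify the descent directly from centrality of $\pi_1(\Symp(D))$ and additivity of homogeneous quasi-morphisms on commuting elements, whereas the paper invokes Shtern's theorem for general normal subgroups; both are valid.
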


Let $B_n$ denote the braid group on $n$-strands.
Ishida\cite{ishida14} showed that the restriction
$\Gamma_n|_{Q(B_n)}:Q(B_n) \to Q(\Symp(D, \partial D))$ and the induced map
$EH_b^2(B_n) \to EH_b^2(\Symp(D, \partial D))$ are injective.
Here the symbol $EH_b^2(\cdot)$ denotes the second
exact bounded cohomology defined in section $2$.
Together with this Ishida's theorem and the fact that
$EH_b^2(B_n)$ is infinite-dimensional\cite{bestvina_fujiwara02},
we obtain the following corollary.
\begin{corollary}
  The exact bounded cohomology $EH_b^2(\Symp(D))$ and therefore the
  second bounded cohomology $H_b^2(\Symp(D))$ are
  infinite-dimensional.
\end{corollary}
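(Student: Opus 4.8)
\emph{Proof proposal for the Corollary.}
The plan is to deduce the corollary purely formally from the Theorem above, Ishida's injectivity result, and the Bestvina--Fujiwara theorem, by transporting infinitely many linearly independent classes along the restriction map in bounded cohomology attached to the inclusion $\iota\colon\Symp(D,\partial D)\hookrightarrow\Symp(D)$.

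First I would recall from Section~2 the exact sequence
\[
0\longrightarrow H^1(G;\mathbb{R})\longrightarrow Q(G)\stackrel{q}{\longrightarrow}EH_b^2(G)\longrightarrow 0
\]
valid for every group $G$, in which $q(\phi)$ is the class of the bounded coboundary $\delta\phi$ and $EH_b^2(G)=\ker\!\bigl(H_b^2(G)\to H^2(G;\mathbb{R})\bigr)$; in particular $\delta\phi$ represents a class in $EH_b^2(G)$ for every homogeneous quasi-morphism $\phi$. Since $\iota$ induces compatible restriction maps on $H_b^2$ and on $H^2$, it induces a restriction map $\iota^*\colon EH_b^2(\Symp(D))\to EH_b^2(\Symp(D,\partial D))$, and this map is intertwined with the restriction $Q(\Symp(D))\to Q(\Symp(D,\partial D))$ of homogeneous quasi-morphisms through the sequences above: $\iota^*\,q(\Phi)=q\bigl(\Phi|_{\Symp(D,\partial D)}\bigr)$ for every $\Phi\in Q(\Symp(D))$. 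I would verify this intertwining at the cochain level, where it is immediate.

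Next, fix $n$. By Bestvina--Fujiwara \cite{bestvina_fujiwara02} the space $EH_b^2(B_n)$ is infinite-dimensional, so I can choose $\phi_1,\phi_2,\dots\in Q(B_n)$ whose classes in $EH_b^2(B_n)$ are linearly independent. By Ishida \cite{ishida14} the map $EH_b^2(B_n)\to EH_b^2(\Symp(D,\partial D))$ induced by $\Gamma_n|_{Q(B_n)}$ --- which sends the class of $\phi$ to the class of $\Gamma_n(\phi)$ --- is injective, so the classes $q\bigl(\Gamma_n(\phi_i)\bigr)\in EH_b^2(\Symp(D,\partial D))$ are linearly independent. Each $\Gamma_n(\phi_i)$ is a Gambaudo--Ghys quasi-morphism on $\Symp(D,\partial D)$, hence by the Theorem extends to some $\Phi_i\in Q(\Symp(D))$ with $\Phi_i|_{\Symp(D,\partial D)}=\Gamma_n(\phi_i)$. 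The intertwining then gives $\iota^*\,q(\Phi_i)=q\bigl(\Gamma_n(\phi_i)\bigr)$, so the classes $q(\Phi_i)\in EH_b^2(\Symp(D))$ have linearly independent images under $\iota^*$ and are therefore themselves linearly independent. Thus $EH_b^2(\Symp(D))$ is infinite-dimensional, and since $EH_b^2(\Symp(D))\subseteq H_b^2(\Symp(D))$ the same holds for $H_b^2(\Symp(D))$.

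This is essentially a diagram chase, so I do not anticipate a real obstacle; the only points to be careful about are the compatibility of $\iota^*$ on $EH_b^2$ with restriction of quasi-morphisms through the defining sequence of $EH_b^2$, and the identification of Ishida's injective map on $EH_b^2$ with the one induced by $\Gamma_n|_{Q(B_n)}$, so that it has the stated effect on classes. All the substantive work is already carried by the Theorem and by the cited results.
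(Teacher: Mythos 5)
Your proposal is correct and follows essentially the same route as the paper: the paper also deduces the corollary by observing that, thanks to the extension theorem, the image of $i^*(EH_b^2(\Symp(D)))$ contains $\Gamma_n^*(EH_b^2(B_n))$, which is infinite-dimensional by Ishida and Bestvina--Fujiwara (for $n>2$). Your version merely spells out the compatibility of the restriction map with the quotient $Q(G)\to EH_b^2(G)$, which the paper leaves implicit.
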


This paper is organized as follows.
In section $2$, we recall the bounded cohomology.
In section $3$, we consider the Ruelle's construction and
Gambaudo-Ghys's construction on the universal covering of the
group $\Symp(D)$.
In section $4$, we show the main theorem.
In section $5$, we deal with the extension problem
of homomorphisms,
which relate to the Calabi invariant.

\subsection*{Acknowledgements}
The author thanks to Morimichi Kawasaki and Masakazu Tsuda
for useful discussions.

\section{Bounded cohomology}
Let $G$ be a group.
Let $C^p(G)$ denote the set of maps
from $p$-fold product $G^p$ to $\mathbb{R}$
for $p>0$ and let $C^0(G) = \mathbb{R}$.
The coboundary operator $\delta : C^p(G) \to C^{p+1}(G)$
is defined by
\begin{align*}
  \delta c (g_1, \dots, g_{p+1})
  = c(g_2, \dots, g_{p+1}) &+ \sum_{i=1}^{p}(-1)^i
  c(g_1, \dots , g_i g_{i+1}, \dots , g_{p+1})\\
  &+ (-1)^{p+1}c(g_1, \dots , g_p)
\end{align*}
and the cohomology of $(C^*(G), \delta)$ is called
the {\it group cohomology of} $G$ and denoted by $H^*(G)$.
Note that, by definition, the first cohomology $H^1(G)$ is
equal to the vector space $\Hom(G,\mathbb{R})$ of homomorphisms
from $G$ to $\mathbb{R}$.

Let $(C_b^*(G),\delta)$ denote the subcomplex of
$(C^*(G), \delta)$ consisting of all bounded functions.
Its cohomology is called the {\it bounded cohomology of} $G$
and denoted by $H_b^*(G)$.
The inclusion $C_b^*(G) \hookrightarrow C^*(G)$
induces the map $H_b^*(G) \to H^*(G)$ called the
{\it comparison map}.
The kernel of the comparison map
is called the {\it exact bounded cohomology of} $G$
and denoted by $EH_b^*(G)$.
Then there is an exact sequence
\[
  0 \to H^1(G) \to Q(G) \to EH_b^2(G) \to 0,
\]
where the map $Q(G) \to EH_b^2(G)$ is given by
$\phi \mapsto [\delta \phi]$.
In other words, the second exact bounded cohomology is isomorphic to
the quotient $Q(G)/H^1(G)$.

\section{Homogeneous quasi-morphisms on $\widetilde{\Symp}(D)$}\label{section:on_G^}
On the group $\Symp(D, \partial D)$, many
homogeneous quasi-morphisms
are constracted(\cite{entov_polterovich03}, \cite{gambaudo_ghys04}).
In this section, we apply to the universal covering $\widetilde{\Symp}(D)$
the methods explained in \cite{gambaudo_ghys04}
to obtain homogeneous quasi-morphisms.

\subsection{Ruelle invariant}\label{subsection:ruelle}
For $x \in D$ and a path $\{ g_t \}_{t \in [0,1]}$ in $\Symp(D)$ with
$g_0 = \id$,
let $u_t(x) \in \mathbb{R}^2 \setminus (0,0)$
denote the first column of $dg_t(x) \in SL(2, \mathbb{R})$.
Then the variation of the angle of $u_t(x)$ depends on $x$ and
the homotopy class of the path $\{ g_t \}_{t \in [0,1]}$
relatively to fixed ends.
Thus, for $\alpha \in \widetilde{\Symp}(D)$
represented by the path $\{ g_t \}_{t \in [0,1]}$, we denote
the variation of the angle of $u_t(x)$
by $\Ang_{\alpha}(x)$.
For $\alpha, \beta \in \widetilde{\Symp}(D)$ and
a path $\{ h_t \}_{t \in [0,1]}$ which represents $\beta$,
the inequality
\begin{equation}\label{ang_inequality}
  | \Ang_{\alpha \beta}(x) - \Ang_{\beta}(x)
  - \Ang_{\alpha}(h_1(x))| < 1/2
\end{equation}
holds for any $x \in D$,
where we consider $S^1$ as $\mathbb{R}/\mathbb{Z}$.
By the above inequality (\ref{ang_inequality}), the
function $r : \widetilde{\Symp}(D) \to \mathbb{R}$ defined by
\begin{equation}
  r(\alpha) = \int_{D} \Ang_{\alpha}\cdot \  \omega
\end{equation}
is a quasi-morphism on $\widetilde{\Symp}(D)$.
Let $\overline{r}$ denote the homogenization of $r$.
By construction, the restriction of $\overline{r}$ to
$\Symp(D, \partial D)
= \widetilde{\Symp(D, \partial D)}$
coincides with the classical Ruelle invariant on the disk.
Since the Ruelle invariant is non-trivial
homogeneous quasi-morphism on $\Symp(D, \partial D)$, so is
$\overline{r} : \widetilde{\Symp}(D) \to \mathbb{R}$.

\subsection{Gambaudo-Ghys construction}
Let $X_n$ denote the $n$-fold configuration space of $D$.
For $\alpha \in \widetilde{\Symp}(D)$ and for almost all
$x = (x_1, \dots, x_n) \in X_n$, a pure braid
$\gamma(\alpha;x) \in P_n$ is defined as follows.
Let us fix a base point $z = (z_1, \dots, z_n) \in X_n$.
Take a path $\{ g_t \}_{t \in [0,1]}$ which represents
$\alpha \in \widetilde{\Symp}(D)$.
Then we obtain a loop $l(\alpha;x)$ in $X_n$ by
\[
  l(\alpha;x) = \begin{cases}
    (1-3t)z + 3tx & (0 \leq t \leq 1/3)\\
    g_t(x) & (1/3 \leq t \leq 2/3)\\
    (3-3t)g_1(x) + (3t-2)z & (2/3 \leq t \leq 1),
\end{cases}
\]
where $g_t(x) = (g_t(x_1), \dots, g_t(x_n)) \in X_n$.
This loop is well-defined for almost all $x \in X_n$ and
its homotopy class is independent of the choice of
representatives of $\alpha$.
Thus we define $\gamma(\alpha;x) \in P_n$ as the braid
represented by the loop $l(\alpha;x)$.

For a (homogeneous) quasi-morphism
$\phi$ on the pure braid group $P_n$,
we define a function
$\hat{\Gamma}_n(\phi) : \widetilde{\Symp}(D) \to \mathbb{R}$
by
\begin{equation}
  \hat{\Gamma}_n(\phi)(\alpha) = \int_{X_n} \phi(\gamma(\alpha;x))dx,
\end{equation}
where $dx$ is the volume form on $X_n$ induced from the
volume form $\omega^n$ on $D^n$.
The proof of the integrability of the function
$\phi(\gamma(\alpha;x))$ and the fact that
$\hat{\Gamma}_n(\phi)$ is a quasi-morphism
is the same as in \cite[Lemma 4.1]{brandenbursky11}.
Let $\widetilde{\Gamma}_n(\phi)$ denote the homogenization of
$\hat{\Gamma}_n(\phi)$, then we have a linear map
$\widetilde{\Gamma}_n: Q(B_n) \to Q(\widetilde{\Symp}(D))$.

Let $\iota : \Symp(D, \partial D)
= \widetilde{\Symp(D, \partial D)} \hookrightarrow \widetilde{\Symp}(D)$
be the inclusion.
Then, by construction, the composition
$\iota^* \circ\widetilde{\Gamma}_n : Q(B_n) \to Q(\Symp(D, \partial D))$
coincides with the original Gambaudo-Ghys construction on
$\Symp(D, \partial D)$.
Put $\Gamma_n = \iota^* \circ\widetilde{\Gamma}_n$.
Ishida \cite{ishida14} showed that
the restriction
\[
  \Gamma_n|_{Q(B_n)} :Q(B_n) \to Q(\Symp(D, \partial D))
\]
is injective.
Thus the linear map
$\widetilde{\Gamma}_n|_{Q(B_n)} : Q(B_n) \to Q(\widetilde{\Symp}(D))$
is also injective.
Since the vector space $Q(B_n)$ is infinite-dimensional
\cite{bestvina_fujiwara02},
the following proposition holds.

\begin{proposition}
  The vector space $Q(\widetilde{\Symp}(D))$ is infinite-dimensional.
\end{proposition}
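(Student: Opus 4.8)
The plan is to read off the statement from the two pieces of input already assembled: the injectivity of $\Gamma_n|_{Q(B_n)}$ due to Ishida, and the fact that $Q(B_n)$ is infinite-dimensional due to Bestvina--Fujiwara. The only point that genuinely needs to be made is that the map $\widetilde{\Gamma}_n|_{Q(B_n)} : Q(B_n) \to Q(\widetilde{\Symp}(D))$ is injective; everything else is then formal.

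First I would record that $\widetilde{\Gamma}_n$ is a linear map. This is because $\hat{\Gamma}_n(\phi)$ depends linearly on $\phi$ (it is the integral over $X_n$ of $\phi(\gamma(\alpha;x))$), and homogenization $\phi \mapsto \overline{\phi}$ is a linear operation on the space of quasi-morphisms. Next, I would invoke the identity $\iota^* \circ \widetilde{\Gamma}_n = \Gamma_n$ established in the construction above: for $\alpha \in \Symp(D,\partial D)$, regarded as an element of $\widetilde{\Symp}(D)$ via $\iota$, the configuration-space loop $l(\alpha;x)$ is exactly the one used in the classical Gambaudo--Ghys/Brandenbursky construction, so the pure braids $\gamma(\alpha;x)$ coincide and the integrals agree.

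Now comes the one-line core of the argument. Since $\Gamma_n|_{Q(B_n)} = \iota^* \circ \bigl(\widetilde{\Gamma}_n|_{Q(B_n)}\bigr)$ is injective by Ishida's theorem, and a composition of linear maps can be injective only if the first map in the composition is injective, we conclude that $\widetilde{\Gamma}_n|_{Q(B_n)}$ is injective. Finally, $Q(B_n)$ is infinite-dimensional by \cite{bestvina_fujiwara02}, so its image $\widetilde{\Gamma}_n(Q(B_n)) \subseteq Q(\widetilde{\Symp}(D))$ under this injective linear map is also infinite-dimensional, which proves the proposition.

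The step I expect to require the most care is the compatibility identity $\iota^* \circ \widetilde{\Gamma}_n = \Gamma_n$: one must verify that, for a path contained in $\Symp(D,\partial D)$, the loop $l(\alpha;x)$ is homotopic (rel basepoint) to the loop used in the original construction, and that the almost-everywhere well-definedness and integrability of $x \mapsto \phi(\gamma(\alpha;x))$ are unaffected by passing to the universal cover. Since this compatibility is exactly what was set up in the preceding subsection, the proof of the proposition itself is short once that is in place.
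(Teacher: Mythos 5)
Your proof is correct and follows exactly the paper's route: factor $\Gamma_n|_{Q(B_n)} = \iota^* \circ \widetilde{\Gamma}_n|_{Q(B_n)}$, deduce injectivity of $\widetilde{\Gamma}_n|_{Q(B_n)}$ from Ishida's injectivity of the composite, and conclude from the infinite-dimensionality of $Q(B_n)$ due to Bestvina--Fujiwara. No further comment is needed.
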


Ishida showed in \cite{ishida14} that the linear map
$\Gamma_n|_{Q(B_n)} :Q(B_n) \to Q(\Symp(D, \partial D))$
induces the injective map
$\Gamma_n^* : EH_b^2(B_n) \to EH_b^2(\Symp(D, \partial D))$.
Thus we have the injection
$\widetilde{\Gamma}_n^*: EH_b^2(B_n) \to
EH_b^2(\widetilde{\Symp}(D))$.
Since the $EH_b^2(B_n)$ is infinite-dimensional for
$n> 2$ (see \cite{bestvina_fujiwara02}),
the following theorem holds.

\begin{theorem}
  The second exact bounded cohomology $EH_b^2(\widetilde{\Symp}(D))$
  and therefore
  the bounded cohomology $H_b^2(\widetilde{\Symp}(D))$
  are infinite-dimensional.
\end{theorem}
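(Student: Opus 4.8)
The plan is to deduce the theorem from the map $\widetilde{\Gamma}_n^*$ constructed just above, following the scheme Ishida used in the $\Symp(D,\partial D)$ case. First I would check that $\widetilde{\Gamma}_n$ carries homomorphisms to homomorphisms, so that it descends to a well-defined linear map on exact bounded cohomology. If $\phi \in Q(B_n)$ is a homomorphism, then using the cocycle identity $\gamma(\alpha\beta;x) = \gamma(\alpha;h_1(x))\gamma(\beta;x)$ (where $\beta$ is represented by $\{h_t\}_{t\in[0,1]}$) together with the fact that $h_1$ is a volume-preserving diffeomorphism of $X_n$, a single change of variables shows that $\hat{\Gamma}_n(\phi)$ is itself a homomorphism on $\widetilde{\Symp}(D)$; hence it equals its homogenization $\widetilde{\Gamma}_n(\phi)$. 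Thus $\widetilde{\Gamma}_n(H^1(B_n)) \subseteq H^1(\widetilde{\Symp}(D))$, and, via the identification $EH_b^2(G)\cong Q(G)/H^1(G)$ from Section~2, the map $\widetilde{\Gamma}_n$ induces a linear map
\[
  \widetilde{\Gamma}_n^* : EH_b^2(B_n) \longrightarrow EH_b^2(\widetilde{\Symp}(D)).
\]

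Next I would exploit the factorization $\Gamma_n = \iota^*\circ\widetilde{\Gamma}_n$ noted above. Passing to the quotients by $H^1$, this yields $\Gamma_n^* = \iota^* \circ \widetilde{\Gamma}_n^*$ on second exact bounded cohomology, where $\iota^* : EH_b^2(\widetilde{\Symp}(D)) \to EH_b^2(\Symp(D,\partial D))$ is the restriction map. By Ishida's theorem \cite{ishida14} the map $\Gamma_n^*$ is injective, and since it factors through $\widetilde{\Gamma}_n^*$, the latter is injective as well.

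Finally, for $n>2$ the space $EH_b^2(B_n)$ is infinite-dimensional \cite{bestvina_fujiwara02}, so its injective image under $\widetilde{\Gamma}_n^*$ shows that $EH_b^2(\widetilde{\Symp}(D))$ is infinite-dimensional, which is the first assertion. The second assertion follows formally: $EH_b^2(\widetilde{\Symp}(D))$ is by definition the kernel of the comparison map $H_b^2(\widetilde{\Symp}(D))\to H^2(\widetilde{\Symp}(D))$, hence a subspace of $H_b^2(\widetilde{\Symp}(D))$, which is therefore infinite-dimensional too. In this argument there is no serious obstacle: all of the substantive work is contained in the cited results of Ishida and of Bestvina--Fujiwara, and the only point requiring care is the routine verification that $\widetilde{\Gamma}_n^*$ is well defined on the quotient $Q/H^1$.
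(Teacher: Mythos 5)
Your proposal follows essentially the same route as the paper: factor $\Gamma_n^* = \iota^*\circ\widetilde{\Gamma}_n^*$ on second exact bounded cohomology, deduce injectivity of $\widetilde{\Gamma}_n^*$ from Ishida's injectivity of $\Gamma_n^*$, and conclude via the infinite-dimensionality of $EH_b^2(B_n)$ for $n>2$ due to Bestvina--Fujiwara. The only difference is that you spell out the routine check that $\widetilde{\Gamma}_n$ sends homomorphisms to homomorphisms so that it descends to $Q/H^1\cong EH_b^2$, which the paper leaves implicit; this is a correct and harmless addition.
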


\section{Homogeneous quasi-morphisms on $\Symp(D)$}
In this section, we show that
the Ruelle invariant and the Gambaudo-Ghys quasi-morphisms
on $\Symp(D, \partial D)$
extend to homogeneous quasi-morphisms on $\Symp(D)$.

Let $\phi \in Q(\widetilde{\Symp}(D))$ be either the Ruelle invariant
or a Gambaudo-Ghys quasi-morphism.
Let us consider the short exact sequence
\[
  0 \to \mathbb{Z} = \pi_1(\Symp(D)) \to \widetilde{\Symp}(D)
  \xrightarrow{p} \Symp(D) \to 1.
\]
Since all homogeneous quasi-morphisms on
abelian groups are homomorphism,
the restriction $\phi|_{\pi_1(\Symp(D))}$ is a homomorphism.
Put $a_{\phi} = \phi(1) \in \mathbb{R}$
where $1 \in \pi_1(\Symp(D))$ is the full rotation of the disk $D$.
Let $\widetilde{\Diff}_{+}(S^1)$ be the universal covering of
$\Diff_{+}(S^1)$ the orientation preserving diffeomorphisms of
the circle
and $\rho : \widetilde{\Symp}(D) \to \widetilde{\Diff}_{+}(S^1)$
the restriction to the boundary.
On the group $\widetilde{\Diff}_{+}(S^1)$,
there is a quasi-morphism
$\rot : \widetilde{\Diff}_{+}(S^1) \to \mathbb{R}$
called the rotation number.
Note that the $\rho(1) \in \pi_1(\Diff_{+}(S^1)) = \mathbb{Z}$
is the full rotation of the circle $S^1$ and
thus $\rho^*\rot(1) = \rot(\rho(1)) = 1$.

\begin{remark}
  In general, $a_{\phi}$ is non-zero value.
  For example, let $\phi$ be the Ruelle invariant
  $\overline{r}$ defined in subsection \ref{subsection:ruelle},
  then
  $a_{\phi}$ is equal to the symplectic area of the disk $D$.
\end{remark}

\begin{lemma}\label{lemma:descends}
  The homogeneous quasi-morphism
  $\phi - a_{\phi}\cdot\rho^*\rot$ on $\widetilde{\Symp}(D)$
  descends
  to a homogeneous quasi-morphism on $\Symp(D)$, that is,
  there exists a homogeneous quasi-morphism $\psi$ on
  $\Symp(D)$ satisfying
  $p^* \psi = \phi - a_{\phi}\cdot\rho^*\rot$.
\end{lemma}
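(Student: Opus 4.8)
The plan is to invoke the standard descent criterion for homogeneous quasi-morphisms along a central extension. In the short exact sequence above, the kernel $\pi_1(\Symp(D)) = \mathbb{Z}$ is central in $\widetilde{\Symp}(D)$, being generated by the loop of full rotations. The general fact I would use is that a homogeneous quasi-morphism $f$ is additive on any pair of commuting elements; hence, if $f$ vanishes on the central subgroup $\pi_1(\Symp(D))$, then $f(gz) = f(g) + f(z) = f(g)$ for every $g \in \widetilde{\Symp}(D)$ and every $z \in \pi_1(\Symp(D))$, so $f$ is constant on the fibers of $p$ and descends to a well-defined function $\psi$ on $\Symp(D)$ with $p^*\psi = f$. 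Since $p$ is surjective, choosing lifts of products shows at once that $\psi$ is again a homogeneous quasi-morphism, with the same defect as $f$.

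So the real content is to check that $f := \phi - a_{\phi}\cdot\rho^*\rot$ is a homogeneous quasi-morphism on $\widetilde{\Symp}(D)$ that vanishes on $\pi_1(\Symp(D))$. It lies in $Q(\widetilde{\Symp}(D))$ because $\phi$ does by hypothesis, $\rot \in Q(\widetilde{\Diff}_{+}(S^1))$, and $\rho$ is a group homomorphism, so $\rho^*\rot \in Q(\widetilde{\Symp}(D))$. For the vanishing, note that $\pi_1(\Symp(D)) = \mathbb{Z}$ is cyclic, generated by the full rotation $1$, and that $f|_{\pi_1(\Symp(D))}$ is a homomorphism (a homogeneous quasi-morphism restricted to an abelian subgroup is a homomorphism), so it is determined by the single value $f(1)$. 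By the definition of $a_{\phi}$ we have $\phi(1) = a_{\phi}$, and, as recorded just before the statement, $\rho^*\rot(1) = \rot(\rho(1)) = 1$; therefore $f(1) = a_{\phi} - a_{\phi}\cdot 1 = 0$, and $f$ vanishes on all of $\pi_1(\Symp(D))$.

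Combining the two steps produces the homogeneous quasi-morphism $\psi$ on $\Symp(D)$ with $p^*\psi = \phi - a_{\phi}\cdot\rho^*\rot$, as claimed. I do not expect a genuine obstacle here: the whole argument reduces to the centrality of $\pi_1(\Symp(D))$ in $\widetilde{\Symp}(D)$ together with the one-line computation $f(1) = 0$, and the substantive input — the identification $\phi(1) = a_{\phi}$ and the normalization $\rho^*\rot(1) = 1$ — has already been arranged in the setup preceding the lemma.
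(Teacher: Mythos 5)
Your proof is correct and follows essentially the same route as the paper: both reduce the lemma to the vanishing of $\phi - a_{\phi}\cdot\rho^*\rot$ on $\pi_1(\Symp(D))$, verified by the one-line computation $f(1) = a_{\phi} - a_{\phi}\cdot 1 = 0$. The only difference is that where the paper cites Shtern's theorem to descend along the quotient, you reprove that step directly in the special case of a central kernel via additivity of homogeneous quasi-morphisms on commuting elements, which is a valid self-contained substitute since $\pi_1(\Symp(D))$ is central in $\widetilde{\Symp}(D)$.
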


\begin{proof}
  By definition of $a_{\phi}$, the homogeneous quasi-morphism
  $\phi - a_{\phi}\cdot\rho^*\rot$ is
  equal to $0$ on $\pi_1(\Symp(D))$.
  Thus, by the following Shtern's theorem, the lemma follows.
\end{proof}

\begin{theorem}\cite{Shtern94}
  Let $1 \to K \to G \xrightarrow{p} H \to 1$
  be a short exact sequence
  and $\phi$ a homogeneous quasi-morphism on $G$.
  If $\phi|_{K} = 0$, then there is a
  homogeneous quasi-morphism $\psi$
  on $H$ such that $\phi = p^*\psi$.
\end{theorem}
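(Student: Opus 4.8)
The plan is to define $\psi$ by descending $\phi$ along $p$: for $h\in H$, pick any $g\in G$ with $p(g)=h$ and set $\psi(h):=\phi(g)$. Fix a constant $C$ with $|\phi(ab)-\phi(a)-\phi(b)|\le C$ for all $a,b\in G$. The whole argument then reduces to a single claim: $\phi$ is constant on each fiber of $p$, i.e. $\phi(gk)=\phi(g)$ for all $g\in G$ and $k\in K$. Granting this, $\psi$ is well-defined, and the remaining properties are immediate. If $g_1,g_2$ lift $h_1,h_2$, then $g_1g_2$ lifts $h_1h_2$, so $|\psi(h_1h_2)-\psi(h_1)-\psi(h_2)|=|\phi(g_1g_2)-\phi(g_1)-\phi(g_2)|\le C$; since $g^n$ lifts $h^n$, homogeneity of $\phi$ gives $\psi(h^n)=\phi(g^n)=n\phi(g)=n\psi(h)$; and $p^*\psi=\phi$ holds by construction.

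The heart of the matter is the fiberwise constancy, and this is where homogeneity is essential — the quasi-morphism inequality alone only yields $|\phi(gk)-\phi(g)|\le C$, using $\phi(k)=0$. To upgrade this to an equality I would pass to $n$-th powers. Since $K$ is normal in $G$, expand
\[
  (gk)^n=g^n\cdot\bigl(g^{-(n-1)}kg^{n-1}\bigr)\bigl(g^{-(n-2)}kg^{n-2}\bigr)\cdots\bigl(g^{-1}kg\bigr)\cdot k,
\]
and observe that the product $k_n$ of the last $n$ factors lies in $K$, since each conjugate $g^{-j}kg^{j}$ does. Hence $\phi\bigl((gk)^n\bigr)=\phi(g^nk_n)$, and the quasi-morphism inequality together with $\phi(k_n)=0$ gives $|\phi((gk)^n)-\phi(g^n)|\le C$. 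Dividing by $n$ and invoking homogeneity of $\phi$ yields $|\phi(gk)-\phi(g)|\le C/n$ for every $n\ge 1$, so $\phi(gk)=\phi(g)$.

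The main (and essentially only) obstacle is thus this telescoping step: verifying the identity for $(gk)^n$ and checking that normality of $K$ places each conjugate $g^{-j}kg^{j}$, hence $k_n$, inside $K$. Once that is in place, assembling $\psi$ is routine bookkeeping. I would also note in passing that the construction is canonical — independent of the chosen lifts — precisely because of the fiberwise constancy, and that the defect of $\psi$ is at most $C$, the defect of $\phi$, which is all that Lemma \ref{lemma:descends} requires.
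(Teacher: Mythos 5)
Your argument is correct: the telescoping identity for $(gk)^n$ holds, normality of $K=\Ker p$ places each conjugate and hence $k_n$ in $K$, and the limit $|\phi(gk)-\phi(g)|\le C/n\to 0$ gives the fiberwise constancy from which well-definedness, the defect bound, and homogeneity of $\psi$ all follow. The paper itself gives no proof of this statement (it is quoted from Shtern), and what you have written is the standard argument for it, so there is nothing to compare against and nothing to fix.
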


\begin{theorem}
  The Ruelle invariant and the Gambaudo-Ghys quasi-morphisms
  on $\Symp(D, \partial D)$
  extend to homogeneous quasi-morphisms on $\Symp(D)$.
  In particular, the vector space $Q(\Symp(D))$
  of homogeneous quasi-morphisms is infinite-dimensional.
\end{theorem}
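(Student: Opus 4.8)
The plan is to check that the homogeneous quasi-morphism $\psi$ on $\Symp(D)$ furnished by Lemma~\ref{lemma:descends} really does restrict, along the natural inclusion $j\colon\Symp(D,\partial D)\hookrightarrow\Symp(D)$, to the classical Ruelle invariant (resp. to the Gambaudo-Ghys quasi-morphism $\Gamma_n(\varphi)$) on $\Symp(D,\partial D)$, and then to read off the infinite-dimensionality of $Q(\Symp(D))$ from Ishida's injectivity theorem.

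First I would note that $j$ factors as $j=p\circ\iota$, where $\iota\colon\Symp(D,\partial D)=\widetilde{\Symp(D,\partial D)}\hookrightarrow\widetilde{\Symp}(D)$ is the inclusion of Section~\ref{section:on_G^} and $p\colon\widetilde{\Symp}(D)\to\Symp(D)$ is the covering projection. Applying $j^{*}=\iota^{*}\circ p^{*}$ to the identity $p^{*}\psi=\phi-a_{\phi}\cdot\rho^{*}\rot$ of Lemma~\ref{lemma:descends} yields
\[
  j^{*}\psi=\iota^{*}\phi-a_{\phi}\cdot(\rho\circ\iota)^{*}\rot .
\]
By the constructions recalled in Section~\ref{section:on_G^}, $\iota^{*}\phi$ is exactly the original quasi-morphism on $\Symp(D,\partial D)$ (the classical Ruelle invariant when $\phi=\overline{r}$, and $\Gamma_{n}(\varphi)=\iota^{*}\widetilde{\Gamma}_{n}(\varphi)$ in the Gambaudo-Ghys case). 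Hence the whole statement reduces to the claim $(\rho\circ\iota)^{*}\rot=0$.

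To establish this, observe that any $g\in\Symp(D,\partial D)$ is the identity near $\partial D$, so in particular it restricts to the identity on $\partial D$; and since $\Symp(D,\partial D)$ is contractible, $g$ is joined to $\id$ by a path inside $\Symp(D,\partial D)$, every member of which is the identity near $\partial D$. Restricting this path to the boundary produces the constant path at $\id_{S^{1}}$, so $\rho(\iota(g))$ is the identity element of $\widetilde{\Diff}_{+}(S^{1})$; as $\rot(\id)=0$, we get $(\rho\circ\iota)^{*}\rot=0$ and therefore $j^{*}\psi=\iota^{*}\phi$, which is the asserted extension. This is precisely the point of the Remark above: $a_{\phi}$ is in general nonzero, so the vanishing of $\rho^{*}\rot$ after restriction along $\iota$ is what makes the subtraction in Lemma~\ref{lemma:descends} harmless on $\Symp(D,\partial D)$.

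For the final clause, I would observe that $\varphi\mapsto\psi_{\varphi}$, sending $\varphi\in Q(B_{n})$ to the descent along $p$ of $\widetilde{\Gamma}_{n}(\varphi)-a_{\widetilde{\Gamma}_{n}(\varphi)}\cdot\rho^{*}\rot$, is a well-defined linear map: $\varphi\mapsto a_{\widetilde{\Gamma}_{n}(\varphi)}=\widetilde{\Gamma}_{n}(\varphi)(1)$ is linear, and the descent along the injective map $p^{*}$ is unique. By the computation above $j^{*}\psi_{\varphi}=\Gamma_{n}(\varphi)$, and $\Gamma_{n}|_{Q(B_{n})}$ is injective by Ishida's theorem, so $\varphi\mapsto\psi_{\varphi}$ is injective; since $Q(B_{n})$ is infinite-dimensional for $n\geq 3$ by \cite{bestvina_fujiwara02}, so is $Q(\Symp(D))$. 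The only thing requiring care here is the bookkeeping of the identifications — that $j=p\circ\iota$ and that $\rho$ sends $\iota(\Symp(D,\partial D))$ to the identity of $\widetilde{\Diff}_{+}(S^{1})$; no further analysis is needed, since once $(\rho\circ\iota)^{*}\rot=0$ is in hand the theorem follows formally from Lemma~\ref{lemma:descends} and the constructions of Section~\ref{section:on_G^}.
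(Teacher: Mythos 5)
Your proposal is correct and follows essentially the same route as the paper: descend $\phi-a_{\phi}\cdot\rho^{*}\rot$ via Lemma~\ref{lemma:descends}, use $i=p\circ\iota$ together with the vanishing of $\rho\circ\iota$ on $\Symp(D,\partial D)$ to see that the descended quasi-morphism restricts to $\iota^{*}\phi$, and then invoke Ishida's injectivity of $\Gamma_{n}|_{Q(B_{n})}$ and the infinite-dimensionality of $Q(B_{n})$. Your write-up merely makes explicit two points the paper leaves implicit, namely why $(\rho\circ\iota)^{*}\rot=0$ and why $\varphi\mapsto\psi_{\varphi}$ is linear and injective.
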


\begin{proof}
  By definition of $\phi$, we have to prove that the restriction
  $\phi|_{\Symp(D, \partial D)}$ extends to a
  homogeneous quasi-morphism
  on $\Symp(D)$.
  By Lemma \ref{lemma:descends},
  take the homogeneous quasi-morphism $\psi$ on $\Symp(D)$ satisfying
  $p^* \psi = \phi - a_{\phi}\cdot\rho^*\rot$.
  The composition of $p: \widetilde{\Symp}(D) \to \Symp(D)$ and the
  inclusion $\iota : \Symp(D, \partial D)
  \hookrightarrow \widetilde{\Symp}(D)$
  is equal to the inclusion
  $i : \Symp(D, \partial D) \hookrightarrow \Symp(D)$
  and the composition $\rho: \widetilde{\Symp}(D) \to
  \widetilde{\Diff}_{+}(S^1)$
  and $\iota$ is equal to $0$.
  Then we have
  \[
    \phi|_{\Symp(D, \partial D)} =
    \iota^*(p^*\psi - a_\phi \cdot \rho^*\rot)
    =
    \psi|_{\Symp(D, \partial D)}
  \]
  and this implies that the homogeneous
  quasi-morphism $\psi$ is an
  extension of $\phi|_{\Symp(D, \partial D)}$.
\end{proof}

The above theorem implies that
the image $i^*(Q(\Symp(D)))$
contains the image
$\Gamma_n(Q(B_n))$ of the Gambaudo-Ghys construction.
Thus the image $i^*(EH_b^2(\Symp(D)))$
also contains $\Gamma_n(EH_b^2(B_n))$.
Since $\Gamma_n(EH_b^2(B_n))$ is infinite-dimensional,
the following holds.

\begin{corollary}
  The exact bounded cohomology $EH_b^2(\Symp(D))$ and
  therefore $H_b^2(\Symp(D))$ are infinite-dimensional.
\end{corollary}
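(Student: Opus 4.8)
The plan is to deduce the infinite-dimensionality of $EH_b^2(\Symp(D))$ from the already-established infinite-dimensionality of $EH_b^2(B_n)$, transported through the chain of maps built in this section, and then to conclude the statement about $H_b^2(\Symp(D))$ from the exact sequence relating $Q$, $H^1$ and $EH_b^2$ recalled in Section 2. First I would recall, from the main theorem just proved, that the composition $i^*\circ(\text{extension})$ realizes $\Gamma_n = \iota^*\circ\widetilde{\Gamma}_n$ on $Q(B_n)$; that is, every Gambaudo-Ghys quasi-morphism on $\Symp(D,\partial D)$ coming from $Q(B_n)$ lies in $i^*(Q(\Symp(D)))$. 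Passing to exact bounded cohomology via the natural transformation $\phi\mapsto[\delta\phi]$, this yields that $\Gamma_n^*(EH_b^2(B_n))$ is contained in $i^*(EH_b^2(\Symp(D)))$.

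Next I would invoke Ishida's theorem, quoted in Section 3, that $\Gamma_n^*:EH_b^2(B_n)\to EH_b^2(\Symp(D,\partial D))$ is injective, together with the Bestvina–Fujiwara result that $EH_b^2(B_n)$ is infinite-dimensional for $n>2$. Hence $\Gamma_n^*(EH_b^2(B_n))$ is an infinite-dimensional subspace of $EH_b^2(\Symp(D,\partial D))$, and since it is contained in the image of $i^*$, the space $EH_b^2(\Symp(D))$ must itself be infinite-dimensional (a linear map cannot have an image that contains an infinite-dimensional subspace unless its domain is infinite-dimensional). Finally, since $EH_b^2(\Symp(D))$ is by definition the kernel of the comparison map $H_b^2(\Symp(D))\to H^2(\Symp(D))$, it is in particular a subspace of $H_b^2(\Symp(D))$, so the latter is infinite-dimensional as well.

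The step I expect to need the most care is the diagram-chasing that identifies $\Gamma_n^*(EH_b^2(B_n))$ with a subspace of $i^*(EH_b^2(\Symp(D)))$: one must check that the extension map $Q(\Symp(D,\partial D))\leftarrow$ coming from the main theorem is compatible with the passage $Q\to EH_b^2$, i.e. that it descends to a well-defined map on the quotients $Q/H^1$, and that $i^*$ on bounded cohomology is induced by the same inclusion $i:\Symp(D,\partial D)\hookrightarrow\Symp(D)$ used at the level of quasi-morphisms. This is essentially the content of the sentence preceding the corollary in the excerpt, so in the write-up I would simply make that observation explicit and then cite the two external results. Everything else is a short formal argument once the main theorem and Ishida's injectivity statement are in hand.
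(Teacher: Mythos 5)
Your proposal is correct and follows essentially the same route as the paper: the main theorem gives $\Gamma_n(Q(B_n))\subseteq i^*(Q(\Symp(D)))$, hence $\Gamma_n^*(EH_b^2(B_n))\subseteq i^*(EH_b^2(\Symp(D)))$, and Ishida's injectivity together with Bestvina--Fujiwara forces $EH_b^2(\Symp(D))$, and therefore $H_b^2(\Symp(D))$, to be infinite-dimensional. The only difference is that you spell out the diagram-chasing and the linear-algebra step that the paper leaves implicit.
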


\begin{remark}
  In $Q(B_n)$, there is the abelianization homomorphism
  $a:B_n \to \mathbb{Z}$.
  It is known that $\Gamma_n(a)$ is equal to the Calabi invariant
  up to constant multiple.
  Thus, by the above argument, we can show that the
  Calabi invariant extend to a
  homogeneous quasi-morphism on $\Symp(D)$.
  This extendability of the Calabi invariant is shown in
  \cite{maruyama19}.
\end{remark}

\section{Homomorphisms on $\Symp(D)$}
  In this section, we deal with homomorphisms from $\Symp(D)$
  to $\mathbb{R}$,
  which relate to the Calabi invariant.
  It is known that the restriction map $\Symp(D) \to \Diff_{+}(S^1)$
  is surjective (see \cite{tsuboi00}).
  Put $\Symp(D)_{\rel} = \Ker(\Symp(D) \to \Diff_{+}(S^1))$.
  Let us consider the short exact sequence
  \[
    1 \to \Symp(D)_{\rel} \to \Symp(D) \to \Diff_{+}(S^1) \to 1.
  \]
  On the group $\Symp(D)_{\rel}$, there is a surjective homomorphism
  $\Cal : \Symp(D)_{\rel} \to \mathbb{R}$ called
  the Calabi invariant.
  Let us consider a part of five-term exact sequence
  \begin{align}\label{five-term}
    H^1(\Symp(D);\mathbb{R}) \to
    H^1(\Symp(D)_{\rel};\mathbb{R})^{\Symp(D)} \xrightarrow{\delta}
    H^2(\Diff_{+}(S^1);\mathbb{R}),
  \end{align}
  where $H^1(\Symp(D)_{\rel};\mathbb{R})^{\Symp(D)}$ is
  $\Symp(D)$-invariant homomorphisms
  on $\Symp(D)_{\rel}$.
  Then it is shown in \cite{bowden11} that the element
  $\delta(\Cal)$ is equal to the real Euler class
  $e_{\mathbb{R}}$ of
  $\Diff_{+}(S^1)$
  up to non-zero constant multiple.
  Thus, by the exactness of (\ref{five-term}),
  the Calabi invariant cannot extend to a homomorphism on $\Symp(D)$.
  Let us normalize the Calabi invariant as
  $\delta(\Cal) = e_{\mathbb{R}}$.
  Let $f : \mathbb{R} \to \mathbb{R}$ be a
  discontinuous homomorphism
  satisfying $f(q) = 0$ for any $q \in \mathbb{Q}$.
  Since the Calabi invariant is surjective to $\mathbb{R}$,
  the composition
  $f \circ \Cal$ is non-trivial if $f \neq 0$.
  \begin{theorem}
    The composition $f\circ \Cal$ extends to a homomorphism on $\Symp(D)$.
    In particular, the cohomology $H^1(\Symp(D))$ is infinite-dimensional.
  \end{theorem}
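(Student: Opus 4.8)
The plan is to recognize the obstruction to extending $f\circ\Cal$ to a homomorphism on $\Symp(D)$ as the image of $f\circ\Cal$ under the connecting map $\delta$ of the five-term exact sequence \eqref{five-term}, and to show that this image vanishes precisely because $f$ kills $\mathbb{Z}\subset\mathbb{Q}$. First one checks that $f\circ\Cal$ genuinely lies in $H^{1}(\Symp(D)_{\rel};\mathbb{R})^{\Symp(D)}$, the domain of $\delta$: this is immediate from the conjugation-invariance of the Calabi homomorphism, since $\Cal(ghg^{-1})=\Cal(h)$ for $g\in\Symp(D)$ and $h\in\Symp(D)_{\rel}$ gives $f(\Cal(ghg^{-1}))=f(\Cal(h))$.

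The heart of the argument is the naturality identity $\delta(f\circ\Cal)=f_{*}\,\delta(\Cal)$, where $f_{*}$ denotes the map on $H^{2}(\Diff_{+}(S^1);\mathbb{R})$ induced by regarding the additive map $f$ as a homomorphism of (trivial) coefficient modules. This holds because $f$ induces a morphism of Lyndon--Hochschild--Serre spectral sequences for the extension $1\to\Symp(D)_{\rel}\to\Symp(D)\to\Diff_{+}(S^1)\to1$ and hence commutes with the transgression $\delta$; at the cochain level, if $s\colon\Diff_{+}(S^1)\to\Symp(D)$ is a set-theoretic section, then $\delta(\Cal)$ is represented by the real $2$-cocycle $(\varphi_{1},\varphi_{2})\mapsto\Cal\bigl(s(\varphi_{1})s(\varphi_{2})s(\varphi_{1}\varphi_{2})^{-1}\bigr)$ and $\delta(f\circ\Cal)$ by the same cocycle post-composed with $f$. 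Now $\delta(\Cal)=e_{\mathbb{R}}$ by the chosen normalization together with Bowden's theorem \cite{bowden11}, and $e_{\mathbb{R}}=c_{*}(e)$ is the image of the integral Euler class $e\in H^{2}(\Diff_{+}(S^1);\mathbb{Z})$ under the coefficient inclusion $c\colon\mathbb{Z}\hookrightarrow\mathbb{R}$. Therefore $\delta(f\circ\Cal)=f_{*}c_{*}(e)=(f\circ c)_{*}(e)=0$, because $f\circ c\colon\mathbb{Z}\to\mathbb{R}$ is the zero map by the hypothesis $f|_{\mathbb{Q}}=0$. By exactness of \eqref{five-term}, $f\circ\Cal$ lies in the image of $H^{1}(\Symp(D);\mathbb{R})\to H^{1}(\Symp(D)_{\rel};\mathbb{R})^{\Symp(D)}$, i.e.\ it extends to a homomorphism on $\Symp(D)$; this extension is moreover unique, since $\Diff_{+}(S^1)$ is perfect and hence $H^{1}(\Diff_{+}(S^1);\mathbb{R})=0$ kills the inflation indeterminacy.

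For the final assertion I would apply this to many $f$ at once. Let $V$ be the real vector space of additive maps $f\colon\mathbb{R}\to\mathbb{R}$ with $f|_{\mathbb{Q}}=0$, i.e.\ $V\cong\Hom_{\mathbb{Q}}(\mathbb{R}/\mathbb{Q},\mathbb{R})$; each nonzero element of $V$ is automatically discontinuous. Since $\mathbb{R}/\mathbb{Q}$ is an infinite-dimensional $\mathbb{Q}$-vector space, choosing a $\mathbb{Q}$-basis and taking the associated coordinate functionals exhibits infinitely many $\mathbb{R}$-linearly independent elements $f_{1},f_{2},\dots$ of $V$. Picking an extension $\widehat{f_{i}\circ\Cal}\in H^{1}(\Symp(D);\mathbb{R})$ of each, any real linear relation $\sum c_{i}\widehat{f_{i}\circ\Cal}=0$ restricts on $\Symp(D)_{\rel}$ to $\sum c_{i}(f_{i}\circ\Cal)=0$, whence $\sum c_{i}f_{i}=0$ because $\Cal$ is surjective, forcing all $c_{i}=0$. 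Hence $H^{1}(\Symp(D))=\Hom(\Symp(D),\mathbb{R})$ is infinite-dimensional.

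The step I expect to be the main obstacle is the naturality $\delta(f\circ\Cal)=f_{*}\delta(\Cal)$ for a genuinely discontinuous additive $f$: one must be sure that the transgression in the five-term sequence is natural in the coefficient group under arbitrary additive maps, not merely under $\mathbb{R}$-linear ones, and it is exactly this flexibility that lets a discontinuous $f$ annihilate the (integral) Euler class while a continuous one cannot. Once the explicit cocycle description of $\delta(\Cal)$ is in place this becomes routine, and the remainder of the argument is formal.
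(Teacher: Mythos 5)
Your proposal is correct and follows essentially the same route as the paper: both identify the obstruction as $\delta(f\circ\Cal)$ in the five-term exact sequence, use naturality in the coefficients to write it as $f_{*}e_{\mathbb{R}}=(f\circ\iota)_{*}e_{\mathbb{Z}}$, and conclude it vanishes since $f$ kills $\mathbb{Z}$. Your additional details (the explicit transgression cocycle, and the family of linearly independent $f_i$ giving infinite-dimensionality of $H^1(\Symp(D))$) are correct elaborations of steps the paper leaves implicit.
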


  \begin{proof}
    By the exactness of (\ref{five-term}),
    we have to show that the image $\delta(f\circ\Cal)$ is equal
    to $0$ in $H^2(\Diff_{+}(S^1);\mathbb{R})$.
    For any group $\Gamma$,
    let $f_* : H^*(\Gamma; \mathbb{R}) \to H^*(\Gamma; \mathbb{R})$
    denote the coefficients change by $f$.
    Since the five-term exact sequence is natural with respect to
    coefficients changes,
    we obtain
    \[
      \delta(f\circ \Cal) = \delta(f_*(\Cal))
      = f_*e_{\mathbb{R}}.
    \]
    Let $\iota :\mathbb{Z} \to \mathbb{R}$ be the inclusion
    and $e_{\mathbb{Z}} \in H^2(\Diff_{+}(S^1);\mathbb{Z})$
    the integral Euler class of $\Diff_{+}(S^1)$.
    Since the real Euler class $e_{\mathbb{R}}$ is equal
    to $\iota_{*}(e_{\mathbb{Z}})$,
    we have
    \[
      f_*e_{\mathbb{R}} =
      (f\iota)_{*}e_{\mathbb{Z}}
      = (0)_* e_{\mathbb{Z}}
      = 0
    \]
    and the theorem follows.
  \end{proof}

\bibliographystyle{amsplain}
\bibliography{qm.bib}
\end{document}